\documentclass[12pt,a4paper]{article}
\usepackage{graphicx} 

\usepackage{amsfonts, amsmath, amssymb, amsthm}
\usepackage{tabularx}
\usepackage{xcolor}
\usepackage{rotating}

\usepackage{indentfirst}
\usepackage[colorlinks=true, linkcolor=blue, citecolor=blue, urlcolor=blue]{hyperref}

\topmargin=-1in
\headheight=3em
\headsep=8mm
\oddsidemargin=0mm
\evensidemargin=0mm
\linespread{1.5}
\textwidth=165mm
\textheight=240mm

\usepackage{multirow}
\usepackage{array}
\usepackage{booktabs}
\usepackage{float}
\usepackage{caption}

\theoremstyle{plain}
\newtheorem{remark}{Remark}

\newtheorem{theorem}{Theorem}[section]
\newtheorem{lemma}{Lemma}[section]
\newtheorem{result}{Result}[section]



\theoremstyle{plain}

\theoremstyle{definition}



\usepackage{natbib}
\bibliographystyle{apalike}
\setcitestyle{authoryear, open={(},close={)}}

\begin{document}

\title{\textbf{\Large To Study Properties of a Known Procedure in Adaptive Sequential Sampling Design}}

\author{Sampurna Kundu, Jayant Jha and Subir Kumar Bhandari\\
\small{Interdisciplinary Statistical Research Unit, Indian Statistical Institute, Kolkata, India}\\
\small{ \textit{\href{sampurna.kundu58@gmail.com}{sampurna.kundu58@gmail.com}, 
       \href{jayantjha@gmail.com}
         {jayantjha@gmail.com}, \href{subirkumar.bhandari@gmail.com}{subirkumar.bhandari@gmail.com}}}   
}

\date{}
\maketitle

\begin{abstract}
We consider the procedure proposed by Bhandari et al. \citeyearpar{Bhandari2009} in the context of two-treatment clinical trials, with the objective of minimizing the applications of the less effective drug to the least number of patients. Our focus is on an adaptive sequential procedure that is both simple and intuitive. Through a refined theoretical analysis, we establish that the number of applications of the less effective drug is a finite random variable whose all moments are also finite. In contrast, Bhandari et al. \citeyearpar{Bhandari2009} observed that this number increases logarithmically with the total sample size. We attribute this discrepancy to differences in their choice of the initial sample size and the method of analysis employed. We further extend the allocation rule to multi-treatment setup and derive analogous finiteness results, reinforcing the generalizability of our findings. Extensive simulation studies and real-data analyses support theoretical developments, showing stabilization in allocation and reduced patient exposure to inferior treatments as the total sample size grows. These results enhance the long-term ethical strength of the proposed adaptive allocation strategy.
\end{abstract}

\setlength{\parindent}{0pt}
{\bf Keywords:} Adaptive allocation, Average sample number, Composite hypothesis, Incorrect inference probability, Less effective drug application count

\section{Introduction}
\setlength{\parindent}{20pt}

The field of adaptive sequential design has a long history of research, particularly in optimizing multiple objective functions. Berry and Fristedt \citeyearpar{Berry85} in their book on Bandit Problems, explored Bayesian methods to achieve optimal designs in this context. There has been substantial research in the field of adaptive sequential analysis (see e.g., Friedman et al. \citeyearpar{Friedman2010}; Ivanova et al. \citeyearpar{Ivanova2000}; Rosenberger et al. \citeyearpar{Rosenberger2004}).

In conventional sequential analysis for the two-population problem, samples are drawn one by one at each stage from both populations, ensuring that the sample sizes remain equal. The primary objectives in these cases are to make inferences about the population parameters with less error while minimizing the average sample number.

In contrast, adaptive sampling allows for unequal sampling from the two populations at each stage (sometimes selecting a sample exactly from one population). The choice of which population to sample at each stage depends on the performance of previously collected data. This approach has a natural connection with the context of most studied clinical trials, where two treatments are applied sequentially to a series of patients. The goal in such trials is to balance the need for statistical accuracy with ethical considerations, specifically applying the less effective treatment to a minimum number of patients.

Modern methodologies align well with the objectives of adaptive sequential sampling by developing advanced designs across various settings, such as covariate adjustment, ordinal responses, crossover trials and multi-treatment response adaptive design --- that primarily emphasize allocation strategies (or rules) and efficiency rather than bounding the usage of suboptimal treatments (Das et al. \citeyearpar{Das2023}; Bandyopadhyay et al. \citeyearpar{Bandyopadhyay2020}; Biswas et al. \citeyearpar{Biswas2020}; Biswas et al. \citeyearpar{PhaseIII2008}, pp. $33$ -- $53$). Das \citeyearpar{DasR2024} and Das \citeyearpar{DasS2024} devoted much effort to designing ethically strong procedures with reduction in allocations to the less effective treatment, including considerations for misclassifications and adaptive interim decisions. However, our approach directly focuses on this ethical aspect by ensuring finiteness of the expected number of applications of the less effective treatment.

In this context, Bhandari et al. \citeyearpar{Bhandari2007} considered the case with known face value of the parameters and concluded that the expected number of applications of the less effective drug is finite under an adaptive sequential design. However, where the parameters are totally unknown, Bhandari et al. \citeyearpar{Bhandari2009} found that the expected number of applications of the less effective drug increases logarithmically with the total sample size. This result was derived under the assumption of a large initial sample size.

In the present work, we modify the procedure proposed in Bhandari et al. \citeyearpar{Bhandari2009} and provide a more refined analysis to elucidate that the expected number of applications of the less effective drug remains finite. In fact, we show that all the moments of the random variable that denotes the number of applications of the less effective drug, are finite.

The paper is structured as follows. We explicate the two-treatment procedure in Section \ref{Section 2}, followed by a theorem with some remarks in Section \ref{Section 3}. The proof of the theorem is provided in Section \ref{Section 4}. In Section \ref{Section 5}, we introduce the extended version of the two-treatment procedure in the context of multi-treatment procedure and present its theoretical properties. Section \ref{Section 6} assesses the results of comprehensive simulation studies under various parameter choices. Section \ref{Section 7} illustrates the effectiveness of the proposed methods through real-data analyses based on two clinical trial datasets. Finally, Section \ref{Section 8} concludes the paper.

\section{Preliminaries} \label{Section 2}
\setlength{\parindent}{0pt}

Let $\mathcal{X} = X_{1}, X_{2}, X_{3}, \ldots$ follow the i.i.d.
$\operatorname{N}(\theta_{0}, \sigma_{0}^{2})$ and $\mathcal{Y} = Y_{1}, Y_{2}, Y_{3}, \ldots$ follow the i.i.d. $\operatorname{N}(\theta_{1}, \sigma_{1}^{2})$ and they be two independent data streams. We will use population $0$ and population $1$ for $\mathcal{X}$ and $\mathcal{Y}$, respectively. We draw samples adaptively, i.e., we draw samples sequentially and at stage $n$, after drawing a total of $n$ samples, we define the following two count variables:
\begin{align*}
N^\prime_{0, n} := \text{ number of samples drawn from } \mathcal{X},\\
N^\prime_{1, n} := \text{ number of samples drawn from } \mathcal{Y}.
\end{align*}

Let $\hat{\theta}_{0, n}$ and $\hat{\theta}_{1, n}$ denote the sample means at stage $n$ for $\mathcal{X}$ and $\mathcal{Y}$, respectively.
We adopt the following allocation rule:

\begin{enumerate}

\item[(i)] If $\hat{\theta}_{0, n} - \hat{\theta}_{1, n} > 0$, we increase $N^\prime_{0, n}$ by $1$,

\item[(ii)] If $\hat{\theta}_{0, n} - \hat{\theta}_{1, n} < 0$, we increase $N^\prime_{1, n}$ by $1$,

\item[(iii)] If $\hat{\theta}_{0, n} - \hat{\theta}_{1, n} = 0$, we increase either $N^\prime_{0,n}$ or $N^\prime_{1, n}$ by $1$ with probability $\frac{1}{2}$ each.

\end{enumerate}

There are $N$ patients in total. Finally, when $n=N$, we accept the null $H_{0}: \theta_{0} > \theta_{1}$, with probability $1$, if $\hat{\theta}_{0, N} - \hat{\theta}_{1, N} > 0$ and with probability $\frac{1}{2}$, if $\hat{\theta}_{0, N} - \hat{\theta}_{1, N} = 0$.

\begin{remark}
    The procedure described above was extensively studied by Bhandari et al. \citeyearpar{Bhandari2009}, where it was referred to as \emph{Procedure III}.
\end{remark}

\begin{remark}
    In the proof of the main result, we assume $\sigma_{0}^{2}$ and $\sigma_{1}^{2}$ to be unity and $\theta_{0}>\theta_{1}$ without any loss of generality.
\end{remark}

\section{Main Result} \label{Section 3}

Let us define the following:
$$N_{1, n} := \min \{N^\prime_{1, n}, N^\prime_{0, n}\}\quad \text{and} \quad N_{1, N} := \min \{N^\prime_{1, N}, N^\prime_{0, N}\}.$$ 

\begin{theorem} \label{Theorem 3.1}
Under the notations and assumptions given in the Section \ref{Section 2}, we have the following theorem:
\begin{enumerate}

\item[(i)] $\lim\limits_{N \to \infty} N_{1, N} < \infty$, i.e., $N_{1, N}$ is a finite random variable.

\item[(ii)] All the moments of $N_{1, N}$ are bounded (uniformly over $N$).

\end{enumerate}
\end{theorem}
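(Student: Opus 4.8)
Throughout, write $\delta := \theta_0-\theta_1>0$ and, for $j,m\ge1$, let $\hat\theta_0^{(j)}$ (resp.\ $\hat\theta_1^{(m)}$) denote the average of the first $j$ observations of $\mathcal X$ (resp.\ the first $m$ of $\mathcal Y$), so that at stage $n$ one has $\hat\theta_{0,n}=\hat\theta_0^{(N'_{0,n})}$ and $\hat\theta_{1,n}=\hat\theta_1^{(N'_{1,n})}$. Since $N'_{0,n}$ and $N'_{1,n}$ are nondecreasing, $N_{1,n}$ is nondecreasing, so $N_{1,\infty}:=\lim_{N\to\infty}N_{1,N}=\min\{N'_{0,\infty},N'_{1,\infty}\}$ exists in $\{1,2,\dots\}\cup\{\infty\}$. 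For part (i) I would argue that $N'_{0,\infty}$ and $N'_{1,\infty}$ cannot both be infinite: if they were, then $N'_{0,n}\to\infty$ and $N'_{1,n}\to\infty$, so by the strong law $\hat\theta_{0,n}\to\theta_0$ and $\hat\theta_{1,n}\to\theta_1$, hence $\hat\theta_{0,n}-\hat\theta_{1,n}\to\delta>0$ and from some stage on the allocation rule always picks population $0$ — contradicting $N'_{1,n}\to\infty$. They cannot both be finite either, as $N'_{0,n}+N'_{1,n}=n$. So a.s.\ exactly one of them is infinite and $N_{1,\infty}$ equals the other, which is finite; this proves (i).

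For part (ii), since $N_{1,N}\uparrow N_{1,\infty}$ we have $E[N_{1,N}^p]\le E[N_{1,\infty}^p]$ for all $N$, so it suffices to show $E[N_{1,\infty}^p]<\infty$ for every $p$, and for that it is enough that $P(N_{1,\infty}\ge k)$ decay faster than every power of $k$ (I expect one can in fact obtain a genuinely exponential bound, up to sub-exponential factors). I would split according to which of $N'_{0,\infty},N'_{1,\infty}$ is infinite. On $\{N'_{1,\infty}=\infty\}$ population $0$ receives only finitely many observations and is never resampled; since $\hat\theta_{1,n}\to\theta_1$ on this event, freezing at count $L:=N'_{0,\infty}$ forces $\hat\theta_0^{(L)}\le\theta_1$ (otherwise $\hat\theta_{0,n}-\hat\theta_{1,n}\to\hat\theta_0^{(L)}-\theta_1>0$ and population $0$ would be resampled). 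Hence $\{N_{1,\infty}\ge k,\,N'_{1,\infty}=\infty\}\subseteq\bigcup_{j\ge k}\{\hat\theta_0^{(j)}\le\theta_1\}$, and since $\hat\theta_0^{(j)}-\theta_0$ is $\mathrm N(0,1/j)$ the Gaussian tail bound $P(\hat\theta_0^{(j)}\le\theta_1)\le e^{-\delta^2 j/2}$ together with a union bound give a contribution of order $e^{-\delta^2 k/2}$.

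The substantive case is $\{N'_{0,\infty}=\infty\}$, where $N_{1,\infty}=N'_{1,\infty}<\infty$. Here I would introduce the stabilization times $A:=\sup\{j:\hat\theta_0^{(j)}\le\theta_0-\tfrac{\delta}{3}\}$ and $B:=\sup\{m:\hat\theta_1^{(m)}\ge\theta_1+\tfrac{\delta}{3}\}$ (with $\sup\emptyset=0$); both are a.s.\ finite, and by the same Gaussian-tail-plus-union-bound argument $P(A\ge j)\le Ce^{-cj}$ and $P(B\ge j)\le Ce^{-cj}$. Since $\theta_1+\tfrac{\delta}{3}<\theta_0-\tfrac{\delta}{3}$, whenever population $0$'s count exceeds $A$ and population $1$'s count exceeds $B$ we have $\hat\theta_{0,n}>\hat\theta_{1,n}$, so population $1$ is frozen. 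Consequently, on $\{N'_{0,\infty}=\infty\}$, if $N_{1,\infty}\ge k$ with $k>\max(A,B)+1$ then every one of the (at least $k$, up to the initial allocation) samplings of population $1$ must occur while population $0$'s count is $\le A$: writing $W_j$ for the number of samplings of population $1$ that occur while population $0$'s count equals $j$, this gives $\sum_{j\le A}W_j\ge k-O(1)$. Now $W_j$ counts a contiguous block of indices $\mu$ with $\hat\theta_1^{(\mu)}>\hat\theta_0^{(j)}$, so $W_j\le\#\{\mu:\hat\theta_1^{(\mu)}>\hat\theta_0^{(j)}\}$; when $\hat\theta_0^{(j)}>\theta_1$ this cardinality has exponential tails and mean of order $(\hat\theta_0^{(j)}-\theta_1)^{-2}$, while the (exponentially few) indices $j<A$ with $\hat\theta_0^{(j)}\le\theta_1$ are special: on $\{N'_{0,\infty}=\infty\}$ population $0$ must eventually leave such a count, which forces the population-$1$ running mean to dip below $\hat\theta_0^{(j)}\le\theta_1$, an event of exponentially small probability that simultaneously bounds $W_j$.

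I expect the main obstacle to be exactly this burn-in estimate — controlling $\sum_{j\le A}W_j$, the population-$1$ observations accrued before the population-$0$ average has settled near $\theta_0$. The delicate point is that a single small index $j$ with $\hat\theta_0^{(j)}$ only slightly above $\theta_1$ can, if population $1$'s count is still small at that moment, produce of order $(\hat\theta_0^{(j)}-\theta_1)^{-2}$ samplings of population $1$; the way around this is that such a configuration simultaneously requires the population-$1$ running mean to stay above a level essentially equal to $\theta_1$ for that many steps and (since $N'_{0,\infty}=\infty$) to eventually fall below $\hat\theta_0^{(j)}$, and the joint probability of these Gaussian events, summed over the handful of relevant $j$, is still super-polynomially small in $k$. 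Assembling the three contributions yields $P(N_{1,\infty}\ge k)\le Ce^{-ck}$ (or at least a super-polynomial bound), hence $E[N_{1,\infty}^p]<\infty$ for all $p$, which is (ii).
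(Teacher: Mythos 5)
Your part (i) is correct and is essentially the paper's argument (the paper invokes the law of the iterated logarithm where the strong law already suffices; your formulation via ``exactly one of $N'_{0,\infty},N'_{1,\infty}$ is infinite'' is, if anything, cleaner than the paper's).

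For part (ii) you have correctly located the crux --- the burn-in term $\sum_{j\le A}W_j$, i.e.\ the population-$1$ samples accrued while the population-$0$ running mean has not yet settled near $\theta_0$ --- but the bound you assert for it is false, and this is a genuine, unfixable gap. Take $j=M$ (the initial population-$0$ count) and the event $\{\hat\theta_0^{(M)}-\theta_1\in(0,k^{-1/2})\}$: since $\hat\theta_0^{(M)}-\theta_1\sim \mathrm N(\delta,1/M)$, this has probability of order $k^{-1/2}$, not exponentially small. Conditionally on it, $\{W_M\ge k\}$ requires the population-$1$ running mean to stay above a level at most $k^{-1/2}$ above $\theta_1$ for $k$ consecutive indices; by the classical persistence probability for a centred random walk (staying above a line of slope $\le k^{-1/2}$ for $k$ steps) this has probability of order $k^{-1/2}$, and your additional requirement --- that the population-$1$ mean eventually drop below $\hat\theta_0^{(M)}$ so that population $0$ is resampled --- holds with probability one, since $\hat\theta_0^{(M)}>\theta_1$ and the population-$1$ mean converges to $\theta_1$. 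The two events live on independent data streams, so their joint probability is of order $k^{-1}$, and conditionally on them population $0$ recovers and wins with probability bounded below by a constant. Hence $\mathbb P(N_{1,\infty}\ge k)\ge c/k$, so $\mathbb E(N_{1,\infty})=\infty$ and $\mathbb E(N_{1,N})$ must grow (like $\log N$) rather than stay bounded: no refinement of the burn-in estimate can deliver the super-polynomial tail you want. You should also be aware that the paper's own proof does not handle this step either: its key remark bounds $N_{1,n}$ by $N'_{0,n}+N'_{1,n}\le M^*_{u0}+M^*_{u1}$, whose left-hand side equals $n$, so that inequality cannot be correct as written, and the paper's simulation tables in fact show $\mathbb E(N_{1,N})$ creeping upward with $N$ in a manner consistent with logarithmic growth.
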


\begin{remark}
    Bhandari et al. \citeyearpar{Bhandari2009} found that both $\frac{N^\prime_{1, N}}{\log (N)}$ and $\frac{\mathbb{E}(N^\prime_{1, N})}{\log (N)}$ tend to positive constants, contrary to our main result. This variation arises because they started with a large initial sample size and employed a different method of (mathematical) analysis.
\end{remark}

\begin{remark}
    Note that, $\lim\limits_{N \to \infty} N_{1, N} \neq \lim\limits_{N \to \infty} N^\prime_{1, N}$ if and only if there is incorrect inference about $H_{0}: \theta_{0} > \theta_{1}$. The probability of incorrect inference is negligible and it is less than or equal to 
\[
\Phi\left(-\left(\theta_{0} - \theta_{1} + \varepsilon\right) \cdot \sqrt{N_{1, N}}\right) < \Phi\left(-\left(\theta_{0} - \theta_{1} + \varepsilon\right) \cdot \sqrt{M}\right),
\]
for some $\varepsilon > 0$, where $M$ is a finite natural number and $\Phi(\cdot)$ is the CDF of the standard normal distribution. We start with an initial sample size of $M$ from each population at the beginning of the procedure.
\end{remark}

\begin{remark}
Note that, in the case of sequential two-population testing problem, when both $\theta_{0}$ and $\theta_{1}$ are unknown, it is not possible to achieve consistent hypothesis testing (i.e., PCS approaching 1 as the total sample size tends to infinity) if the expected number of samples from one population remains finite while the other grows indefinitely.
\end{remark}
 
\section{Proof of Main Result} \label{Section 4}

The following two results, as stated in Chapter $1$, Section $9$ of Billingsley \citeyearpar{Billingsley}, are presented here without proof.

\begin{result} \label{Result 4.1}
(Law of Iterated Logarithm)
Let $\left\{X_{n}\right\}$ be i.i.d. random variables with mean zero and unit variance. Let $S_{n}=X_{1}+X_{2}+X_{3}+\ldots+X_{n}$.
Then, $$\limsup\limits_{n \to \infty} \frac{\left|S_{n}\right|}{\sqrt{2 n \log \operatorname{logn}}} = 1 \quad \text{almost surely.}$$
\end{result}

\begin{result} \label{Result 4.2}
(Chernoff's Theorem in the context of Normal Distribution)
Let $\left\{X_{n}\right\}$ be the i.i.d. random variables following $\operatorname{N}(\mu, 1)$ with $\mu < 0$, with mgf $M(t)$. Suppose $\bar{X}_n=\frac{1}{n}\left(X_{1}+X_{2}+X_{3}+\ldots+X_{n}\right)$. If $\rho = \min\limits_t M(t)$, then $\rho < 1$ and
$$\lim\limits_{n \to \infty} \frac{1}{n} \log \left[\mathbb{P}\left(\bar{X}_n \geq 0\right)\right]=\log (\rho),$$
which implies $\mathbb{P}\left(\bar{X}_n > 0\right) < \mathrm{C} \cdot \rho^{n} \hspace{2mm} \forall \hspace{2mm} n$, for some positive constant $\mathrm{C}$.
\end{result}

\begin{remark}
Note that, along with normal distribution for $\{X_n\}$, Chernoff's theorem also holds for any distribution with $\mathbb{E}(X_n)<0$ and $\mathbb{P}(X_n >0)>0$ (see, Billingsley \citeyearpar{Billingsley}).
\end{remark}

Using Result \ref{Result 4.1}, we find that for any given $\varepsilon>0$, there exists a natural number $N_{\varepsilon}^{*}$ and subset $A$ of sample space with $\mathbb{P}(A) > 1 - \varepsilon$, such that
$$\forall \hspace{2mm} \omega \in A, \quad |\hat{\theta}_{0, (n)}(\omega)-\theta_{0}| < \varepsilon \text{ and } |\hat{\theta}_{1, (n)}(\omega)-\theta_{1}| < \varepsilon \text{, (for all }n > N_{\varepsilon}^{*}\text{),}$$
where $\hat{\theta}_{0, (n)}$ and $\hat{\theta}_{1, (n)}$ are the mean of first $n$ observations coming from the data streams $\mathcal{X}$ and $\mathcal{Y}$  respectively.

Using the above, we find that with probability $\geq(1-\varepsilon)$,
$$
\lim\limits_{N \to \infty} N_{1, N}<N_{\varepsilon}^{*} \text{, i.e., } N_{1, N} \text{ is a finite random variable.}
$$
This proves part (i) of \autoref{Theorem 3.1}.
\\
Let us define
\begin{equation} \label{(1)}
    M_{u}^{*} = \inf \left\{ k : \sup\limits_{n \geq k} \bar{Z}_n \leq u \right\},
\end{equation}

where $Z_{1}, Z_{2}, Z_{3}, \ldots, Z_{n}$ are i.i.d. $\operatorname{N}(0,1)$ and $\bar{Z}_n = \frac{1}{n} \sum_{i=1}^{n} Z_{i}$.\\
Define $X_{i}=Z_{i}-u$ for small $u > 0$, for every natural number $i$.\\
Then, Result \ref{Result 4.2} holds for this sequence of $X_{i}$.

\begin{remark}
Let $u=\frac{\theta_0-\theta_1}{3}$. Then, $N_{1,n} \leq M_{u0}^*+M_{u1}^*$, for some $M_{ui}^*$ ($i=0,1$) which are i.i.d. $M_u^*$. This is because there exists $M_{u0}^*$ and $M_{u1}^*$ (independent) such that $\hat{\theta}_{0, (n)} \in (\theta_0-u, \theta_0 +u)$ for all $n \geq M_{u0}^*$, $\hat{\theta}_{1, (n)} \in (\theta_1-u, \theta_1 +u)$ for all $n \geq M_{u1}^*$ and $N_{1,n}=min\{N_{1,n}',N_{0,n}'\} \leq N_{0,n}'+N_{1,n}' \leq M_{u0}^*+M_{u1}^*$.
\end{remark}
The proof of part (ii) of \autoref{Theorem 3.1} will be complete upon proving Lemma \ref{Lemma 4.1}.

\begin{lemma} \label{Lemma 4.1}
    The mgf of the random variable $M_{u}^{*}$ is finite in a small open neighbourhood of 0 i.e., for  $t \in (-\delta, \delta)$ where $\delta >0$ and $\delta$ is small, $\mathbb{E}(\exp(t M_u^*)) < \infty$. Hence, all the moments of the random variable $M_u^*$ are finite.
\end{lemma}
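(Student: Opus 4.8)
The plan is to show that the tail probability $\mathbb{P}(M_u^* > k)$ decays geometrically in $k$, and then to read off finiteness of the mgf near $0$ from a geometric-series estimate. For $t \le 0$ there is nothing to prove, since $M_u^* \ge 0$ forces $\mathbb{E}(\exp(t M_u^*)) \le 1$, so all the content lies in a right neighbourhood of $0$. The first step is to unwind the definition \eqref{(1)}: a supremum taken over a smaller index set cannot be larger, so $\{M_u^* \le k\} = \{\sup_{n \ge k} \bar{Z}_n \le u\}$, and hence $\{M_u^* > k\} = \{\sup_{n \ge k} \bar{Z}_n > u\} \subseteq \bigcup_{n \ge k}\{\bar{Z}_n > u\}$.

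Next I would apply Result \ref{Result 4.2} to the i.i.d.\ sequence $X_i := Z_i - u$, which is $\operatorname{N}(-u,1)$ with mean $-u<0$ and $\mathbb{P}(X_i>0)=\mathbb{P}(Z_i>u)>0$. This produces $\rho := \min_t M(t) < 1$ (explicitly $\rho = e^{-u^2/2}$) together with a constant $C>0$ such that $\mathbb{P}(\bar{Z}_n > u) = \mathbb{P}(\bar{X}_n > 0) < C\rho^n$ for every $n$. Combining this with the event inclusion above and a union bound gives
\[
\mathbb{P}(M_u^* > k) \;\le\; \sum_{n \ge k} C\rho^n \;=\; \frac{C}{1-\rho}\,\rho^k \;=:\; C'\rho^k .
\]

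Finally, for $t>0$, summation by parts yields $\mathbb{E}(\exp(t M_u^*)) = 1 + (e^t-1)\sum_{k\ge 1} e^{t(k-1)}\,\mathbb{P}(M_u^* \ge k)$, and inserting the bound $\mathbb{P}(M_u^*\ge k)=\mathbb{P}(M_u^*>k-1)\le C'\rho^{k-1}$ (enlarging $C'$, if needed, so that it also covers $k=1$) leaves a geometric series in $e^t\rho$ that converges precisely when $e^t\rho<1$, i.e.\ when $t<\delta:=-\log\rho=u^2/2$. Hence $\mathbb{E}(\exp(tM_u^*))<\infty$ for all $t\in(-\delta,\delta)$. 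Finiteness of the mgf on a neighbourhood of $0$ is the classical sufficient condition for all moments to be finite (the mgf is then analytic at $0$ with $k$-th derivative $\mathbb{E}((M_u^*)^k)$), which finishes the lemma and, through the Remark just above it, part (ii) of \autoref{Theorem 3.1}.

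The only genuinely non-routine ingredient is the geometric tail bound, and that is exactly what Chernoff's theorem hands us; everything else is bookkeeping — the union bound and the geometric summation. The points that need a little care are the strict-versus-weak inequalities when translating the definition of $M_u^*$ into events (in particular that $\sup_{n\ge k}\bar{Z}_n > u$ forces $\bar{Z}_n>u$ for some individual $n\ge k$), and the harmless adjustment of $C'$ so that the tail bound is valid for every $k\ge 0$.
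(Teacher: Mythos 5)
Your proof is correct and takes essentially the same route as the paper: the identical Chernoff-plus-union-bound geometric tail estimate $\mathbb{P}(M_u^* > k) \le \frac{C\rho^k}{1-\rho}$, followed by summation. The only difference is that you carry out the mgf summation explicitly (obtaining the radius $t < -\log\rho$), whereas the paper writes out only the first-moment bound and declares the mgf case analogous.
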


\begin{proof}
    We will prove the finiteness of the first moment only, i.e.,  $\mathbb{E}\left(M_{u}^{*}\right)<\infty$, as the proof for the mgf is analogous.
\begin{align*}
\mathbb{P}\left(M_{u}^{*} > k\right) & = \mathbb{P}\left[\sup\limits_{n \geq k} \bar{Z}_n > u \right] \\ \notag
& \leq \mathbb{P}\left[\bigcup\limits_{n \geq k}\left(\bar{Z}_n \geq u\right)\right] \\\notag
& \leq \sum_{n=k}^{\infty} \mathbb{P}\left(\bar{X}_n \geq 0\right)\left[\text{since } \bar{X}_n = \bar{Z}_n - u\right] \\\notag
& \leq \sum_{n=k}^{\infty} C\cdot \rho^{n} \quad [\text{using Result } \ref{Result 4.2} ] \\\notag
& = \frac{C \cdot \rho^{k}}{1 - \rho} \quad[\text{this holds for large } k]
\end{align*}

Hence, for large $k$,
$\mathbb{E}\left(M_{u}^{*}\right) \leq k+\frac{C}{(1-\rho)^{2}} \rho^{k}<\infty$.
\end{proof}

\section{Extension to Multiple Treatments} \label{Section 5}
\setlength{\parindent}{0pt}

\subsection{Multi-Treatment Allocation Rule} \label{Section 5.1}

Suppose that we have $m$ ($\geq 2$) competing drugs. Consider $m$ populations: $\mathcal{X}_{1}$, $\mathcal{X}_{2}$, \ldots, $\mathcal{X}_{m}$. The random variables in population $\mathcal{X}_{j}$ ($1 \leq j \leq m$) have i.i.d. $\operatorname{N}(\theta_{j}, \sigma_{j}^{2})$ distribution. We draw samples adaptively, i.e., we draw samples sequentially and at stage $n$, after drawing a total of $n$ samples, we define the following count variables:
\begin{align*}
\text{for } 1 \leq j \leq m\text{, } N^\prime_{j, n} := \text{ number of samples drawn from } \mathcal{X}_{j} \text{.}
\end{align*}
Let $\hat{\theta}_{n, j}$ denotes the sample mean at stage $n$ for $\mathcal{X}_{j}$ for $1 \leq j \leq m$.
We adopt the following allocation rule:
\begin{enumerate}

\item[(i)] Increase $N^\prime_{j^{*}, n}$ by $1$, where $j^{*}$ = $\underset{1 \leq j \leq m}{\arg\max}$ $\hat{\theta}_{n, j}$,

\item[(ii)] If there are $s$ many argmaxes, then increase those count variables with probability $\frac{1}{s}$ each.

\end{enumerate}

\begin{remark}
    We assume $\theta_{1} > \theta_{2} > \ldots > \theta_{m}$ without any loss of generality and $\sigma_{j}^{2}$ to be unity for each $j$.
\end{remark}

\subsection{Theoretical Properties} \label{Section 5.2}

Let us define the following:
\begin{align*}
\hat{j_{n}} := \underset{j}{\arg\max} \hspace{1mm} N^\prime_{j, n}, \hspace{2mm} N_{1, n} := \max\limits_{\substack{1 \leq j \leq m \\ j \neq \hat{j}_{n}}} \hspace{1mm} N^\prime_{j, n},\\
\hat{j_{N}} := \underset{j}{\arg\max} \hspace{1mm} N^\prime_{j, N}, \hspace{2mm} \text{and} \hspace{2mm} N_{1, N} := \max\limits_{\substack{1 \leq j \leq m \\ j \neq \hat{j}_{N}}} \hspace{1mm} N^\prime_{j, N}.
\end{align*}

\begin{theorem} \label{Theorem 5.1}
Under the notations and assumptions given in the Section \ref{Section 5.1}, we have the following theorem:
\begin{enumerate}

\item[(i)] $\lim\limits_{N \to \infty} N_{1, N} < \infty$, i.e., $N_{1, N}$ is a finite random variable.

\item[(ii)] All the moments of $N_{1, N}$ are bounded (uniformly over $N$).

\end{enumerate}
\end{theorem}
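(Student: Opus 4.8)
The plan is to imitate the two--treatment argument of Section~\ref{Section 4} stream by stream and then isolate the one new difficulty that appears for $m\ge 3$. By the Remark of Section~\ref{Section 5.1} take $\sigma_j^2=1$ and $\theta_1>\dots>\theta_m$, and fix $u$ with $0<u<\tfrac12\min_{1\le j\le m-1}(\theta_j-\theta_{j+1})$, so the intervals $I_j:=(\theta_j-u,\theta_j+u)$ are pairwise disjoint and ordered with $I_1$ highest. Writing $\bar\xi^{(j)}_k$ for the mean of the first $k$ observations of the $j$-th stream, put $M_{uj}^{*}:=\inf\{k:\bar\xi^{(j)}_{k'}\in I_j\ \text{for all}\ k'\ge k\}$; these are independent over $j$, a.s.\ finite by Result~\ref{Result 4.1}, and each has a finite mgf near $0$ by (the one-sided versions of) Lemma~\ref{Lemma 4.1}, hence so does $\sum_jM_{uj}^{*}$. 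Call population $j$ \emph{settled at stage $n$} when $N'_{j,n}\ge M_{uj}^{*}$; since $\hat\theta_{n,j}=\bar\xi^{(j)}_{N'_{j,n}}$, a settled population has empirical mean in $I_j$, and settledness is monotone in $n$.

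For part~(i): each $N'_{j,n}$ is nondecreasing, so $T_j:=\lim_nN'_{j,n}\in\{1,2,\dots\}\cup\{\infty\}$ exists and $\sum_jT_j=\infty$. If two populations $j\ne k$ had $T_j=T_k=\infty$ then both are eventually settled, and if $\theta_j>\theta_k$ then $\hat\theta_{n,j}>\theta_j-u>\theta_k+u>\hat\theta_{n,k}$ for all large $n$, so $k$ is never again the argmax, contradicting $T_k=\infty$. Hence there is a unique random index $J$ with $T_J=\infty$ and $T_j<\infty$ for $j\ne J$; for large $N$ then $\hat j_N=J$, and for every $N$, $N_{1,N}\le\sum_{j\ne\hat j_N}N'_{j,N}=N-\max_jN'_{j,N}\le N-N'_{J,N}=\sum_{j\ne J}N'_{j,N}\le\sum_{j\ne J}T_j=:R<\infty$. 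This proves (i), and reduces (ii) to bounding the moments (equivalently, the mgf near $0$) of $R$, the number of stages at which the sampled population is not $J$.

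For part~(ii) I would split the samples reaching a non-winner into those received while it is still unsettled --- at most $M_{uj}^{*}$ per population, hence at most $\sum_jM_{uj}^{*}$ altogether --- and those received while settled. Among settled populations only the one of largest mean can be the argmax, so a non-winner $k$ can get a settled sample only as the current ``top settled'' population. Two observations trim this: a non-winner $k$ with $\theta_k>\theta_J$ never settles at all (if it did, its empirical mean would stay above $\theta_k-u>\theta_J$, eventually beating $J$'s and forcing $T_J<\infty$), so it gets no settled samples; and for a non-winner $k$ with $\theta_k<\theta_J$, a settled sample to $k$ at stage $n$ forces $\hat\theta_{n,J}\le\hat\theta_{n,k}<\theta_k+u<\theta_J-u$, i.e.\ $N'_{J,n}<M_{uJ}^{*,-}:=\inf\{\ell:\bar\xi^{(J)}_{\ell'}>\theta_J-u\ \forall\ell'\ge\ell\}$. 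Hence the settled samples handed to all non-winners number at most $\tau_J:=\inf\{n:N'_{J,n}\ge M_{uJ}^{*,-}\}$, and $R\le\sum_jM_{uj}^{*}+\tau_J$, so it remains to bound the moments of $\tau_J$ uniformly over which population turns out to be $J$.

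The hard part is precisely this control of $\tau_J$, the time the eventual winner needs in order to collect enough observations for its own running mean to stabilise. The obstacle is that while $J$ is unsettled an inferior population $c$ (with $\theta_c<\theta_J$) can be the settled leader and keep drawing samples, and $J$ advances from $\ell$ to $\ell+1$ observations only when $\bar\xi^{(J)}_\ell$ exceeds $c$'s empirical mean, which for the finitely many indices $\ell$ with $\bar\xi^{(J)}_\ell\le\theta_c-u$ can be rare; for $m\ge3$ this can even happen in stages, leadership passing $c_1>c_2>\dots$ as successive higher-mean populations settle, which is the genuinely new feature relative to the two-treatment case. I would handle it by induction on $m$ together with the Chernoff estimate of Result~\ref{Result 4.2}. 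Let $i^{*}$ be the first population to settle; by pigeonhole this occurs by stage $\sum_jM_{uj}^{*}$. If $i^{*}=1$, then from that stage on population $1$ dominates every settled competitor, each $j\ge2$ is sampled only while itself unsettled, and $N_{1,N}\le\sum_{j\ge2}M_{uj}^{*}$. If $i^{*}=c>1$, then after that stage populations $c+1,\dots,m$ each receive at most $M_{uj}^{*}$ further samples (population $c$ beats them once they settle), so they are finished up to a bounded correction, while the subsequence of stages at which one of $1,\dots,c$ is sampled is itself a run of the $c$-population rule on those streams, to which the induction hypothesis applies after discarding the $\le\sum_{j>c}M_{uj}^{*}$ leakage and after an additive, independent, light-tailed correction for the head start population $c$ enjoys. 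That correction --- the overshoot of a transient leader before it is displaced --- is estimated by Result~\ref{Result 4.2}: each extra sample the leader grabs while the soon-to-settle higher-mean population $b$ still has running mean $\le\theta_b-u$ after $\ell$ of its own observations lies in an event of probability $\le C\rho^{\ell}$, and summing over $\ell$ yields a random variable with finite mgf near $0$. Assembling the pieces, $N_{1,N}$ is dominated for every $N$ by a fixed random variable --- a finite sum of independent $M_u^{*}$-type variables plus finitely many light-tailed corrections --- with finite mgf near $0$; in particular all moments of $N_{1,N}$ are finite, uniformly in $N$. That gives (ii).
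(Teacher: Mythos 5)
Your part (i) is correct, and in fact more careful than the paper's own argument: the identification of a unique index $J$ with $T_J=\infty$ and the bound $N_{1,N}\le\sum_{j\ne J}T_j$ are both fine. You have also, rightly, refused the step the paper actually relies on for part (ii), namely $\sum_{j\ne\hat j_n}N'_{j,n}\le\sum_{j\ne\hat j_n}M^*_{uj}$: a settled inferior arm that led early can hold far more than $M^*_{uj}$ samples while a higher-mean arm is still catching up in count, so that inequality (and likewise the two-treatment chain $N'_{0,n}+N'_{1,n}\le M^*_{u0}+M^*_{u1}$, whose left-hand side equals the stage number $n$) is simply not available. So you have located the genuine difficulty --- the samples a settled, inferior, transient leader collects while the eventual winner's running mean is parked below $\theta_J-u$ --- exactly where the paper's proof glosses over it.

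However, your resolution of that difficulty does not close, for two concrete reasons. First, the induction is circular in precisely the case that matters: if the first arm to settle is $i^{*}=m$ (or, in the base case $m=2$, if the inferior arm settles first), the ``embedded $c$-population process'' is the full process again, and Lemma \ref{Lemma 4.1} only controls $M_u^{*}$, not $\tau_J$, so there is no base case to fall back on. Second, and more seriously, the Chernoff estimate bounds the wrong quantity: $C\rho^{\ell}$ controls the probability that index $\ell$ of the winner's stream is bad, not the number of samples the transient leader grabs while the winner sits parked at a bad $\ell$. That sojourn is the first-passage time of the leader's running mean below the level $v=\bar\xi^{(J)}_{\ell}$; conditionally on $v$, a random-walk-above-a-line estimate gives its expectation order $(v-\theta_{\mathrm{leader}})^{-1}$ when $v$ is just above $\theta_{\mathrm{leader}}$, and since $v$ has a density bounded away from zero there, the unconditional expectation of even a single sojourn diverges. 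On the event (of probability bounded below) that the winner later recovers and overtakes the leader in count, that sojourn is a lower bound for $\lim_{N}N_{1,N}$, so this computation indicates $\sup_N\mathbb{E}(N_{1,N})=\infty$ --- consistent with the $\log N$ growth reported by Bhandari et al. (2009) and with the slow upward drift visible in Table \ref{Table 1} --- and no reassembly of light-tailed $M_u^{*}$-type variables can dominate it. The obstruction you flagged as ``the hard part'' is thus not a missing technical lemma but the substance of part (ii) itself.
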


\subsection{Proof of \autoref{Theorem 5.1}} \label{Section 5.3}

Using Result \ref{Result 4.1}, we find that for any given $\varepsilon>0$, there exists a natural number $N_{\varepsilon}^{*}$ and subset $A$ of sample space with $\mathbb{P}(A) > 1 - \varepsilon$, such that
$$\forall \hspace{2mm} \omega \in A, \quad \forall \hspace{2mm} n \geq N_{\varepsilon}^{*}, \quad \forall \hspace{2mm} j \in {1, \dots, m}, \quad |\hat{\theta}_{j, (n)}(\omega)-\theta_{j}| < \varepsilon,$$
where $\hat{\theta}_{j, (n)}(\omega)$ denotes the mean of first $n$ observations coming from the data streams $\mathcal{X}_{j}$.

Using the above, we find that with probability $\geq(1-\varepsilon)$,
$$
\lim\limits_{N \to \infty} N_{1, N}<N_{\varepsilon}^{*} \text{, i.e., } N_{1, N} \text{ is a finite random variable.}
$$
This proves part (i) of \autoref{Theorem 5.1}.

We define $M_{u}^{*}$ as in \autoref{(1)}.

\begin{remark}
Let $u=\frac{\min\limits_{1 \leq j \leq m-1} {(\theta_j-\theta_{j+1})}}{3}$. Then, for each $j$, there exists $M_{uj}^* (\overset{d}{\equiv} M_{u}^{*})$, such that $\forall$ $n \geq M_{uj}^*$, $\hat{\theta}_{j, (n)} \in (\theta_j-u, \theta_j+u)$.
\end{remark}

Then

$$N_{1, n} = \max\limits_{\substack{1 \leq j \leq m \\ j \neq \hat{j}_{n}}} \hspace{1mm} N^\prime_{j, n} \leq \sum\limits_{\substack{1 \leq j \leq m \\ j \neq \hat{j}_{n}}} \hspace{1mm} N^\prime_{j, n} \leq \sum\limits_{\substack{1 \leq j \leq m \\ j \neq \hat{j}_{n}}} \hspace{1mm} M_{uj}^* \leq \sum\limits_{1 \leq j \leq m} \hspace{1mm} M_{uj}^*.$$

Therefore, $N_{1, n} \leq \sum\limits_{1 \leq j \leq m} \hspace{1mm} M_{uj}^*$.

Each $M_{uj}^*$ has bounded moments (using Lemma \autoref{Lemma 4.1}). Also, $M_{uj_{1}}^*$ and $M_{uj_{2}}^*$ are independent $\forall \hspace{1mm} j_1 \neq j_2$, as the populations are independent. So, $N_{1,n}$ also has finite moments.

This proves part (ii) of \autoref{Theorem 5.1}.



\section{Simulation Study} \label{Section 6}
\setlength{\parindent}{20pt}

We conduct simulation studies to evaluate the performance of our adaptive sampling procedure as outlined in Section \ref{Section 2}. In each replication, we generate samples of $X_i$'s and $Y_j$'s adaptively according to the allocation rule and the estimators $\hat{\theta}_{0,n}$ and $\hat{\theta}_{1,n}$ are updated sequentially at each stage $n$. The procedure continues until the stopping condition is met at $n = N$, upon which we record the values of $N_{1,N}$ and whether a correct selection (CS) was made, indicated by a binary outcome ($1$ for correct, $0$ for incorrect).

This entire procedure is repeated $10,000$ times for each setting, and we compute the empirical average of $N_{1,N}$ and the proportion of CS to estimate $\mathbb{E}(N_{1,N})$ and the probability of correct selection (PCS), respectively.

To examine the limiting behavior of $N_{1,N}$ as the total sample size $N$ increases and to support the theoretical findings of Section \ref{Section 3}, we implement the allocation rule described in Section \ref{Section 2} for various parameter combinations $(\theta_0, \theta_1, \sigma_0, \sigma_1)$ in the context of Normal populations. For each $N$ (from moderate to large), we conduct $10,000$ replications and use the simulated data to compute PCS as the proportion of correct decisions. In addition, we estimate $\mathbb{E}(N_{1,N})$ for different parameter settings and analyze its limiting values as $N$ increases.

The simulation results for different mean pairs $(\theta_0, \theta_1)$ are summarized in \autoref{Table 1}. The analysis indicates that $\mathbb{E}(N_{1,N})$ stabilizes to a constant as $N$ increases. The allocation rule consistently assigns more samples to the population with a higher mean value. Each simulation starts with equal initial sample sizes for both populations, determined on the basis of the underlying parameters.

\begin{table*}[h]
\centering
\caption{Simulation results for two Normal populations with distinct mean pairs $(\theta_{0}, \theta_{1})$ and constant variance pair $(\sigma_{0}^{2}, \sigma_{1}^{2}) = (1, 0.7)$ under a fixed initial sample size for two-treatment procedure.} \label{Table 1}
\begin{tabular}{l cc cc cc}
\hline
Total Sample  & \multicolumn{2}{c}{$(\theta_0, \theta_1) = (0.5, 0)$} 
    & \multicolumn{2}{c}{$(\theta_0, \theta_1) = (0.8, 0.2)$} 
    & \multicolumn{2}{c}{$(\theta_0, \theta_1) = (1, 0.5)$} \\
\cline{2-7}
\hspace{2mm} Size $(N)$ & \multicolumn{1}{c}{PCS} & \multicolumn{1}{c}{$\mathbb{E}(N_{1, N})$} 
    & \multicolumn{1}{c}{PCS} & \multicolumn{1}{c}{$\mathbb{E}(N_{1, N})$} 
    & \multicolumn{1}{c}{PCS} & \multicolumn{1}{c}{$\mathbb{E}(N_{1, N})$} \\
\hline
\hspace{5mm} 200  & 0.9396  & 16.4209  & 0.9718  & 15.7146  & 0.9363  & 16.4576 \\
\hspace{5mm} 300  & 0.9407  & 16.7092  & 0.9710  & 15.8912  & 0.9377 & 16.7389 \\
\hspace{5mm} 400  & 0.9412  & 16.8754  & 0.9709  & 15.9557  & 0.9415 & 16.8625 \\
\hspace{5mm} 800  & 0.9413  & 17.6627  & 0.9726  & 16.0868  & 0.9430  & 17.4870 \\
\hspace{5mm} 900  & 0.9467  & 17.1830  & 0.9735  & 16.1228  & 0.9429  & 17.7404 \\
\hspace{3mm} 1000 & 0.9421  & 17.5419  & 0.9732  & 16.2150  & 0.9405  & 17.2427 \\
\hspace{3mm} 1500 & 0.9428  & 18.3047  & 0.9679  & 16.3760  & 0.9407  & 18.1658 \\
\hspace{3mm} 2000 & 0.9452  & 18.1094  & 0.9717  & 16.4335  & 0.9426  & 18.1526 \\
\hspace{3mm} 2500 & 0.9452  & 18.1166  & 0.9718  & 16.4929  & 0.9432  & 18.0036 \\
\hspace{3mm} 3000 & 0.9409  & 18.4554  & 0.9715  & 16.5518  & 0.9344  & 18.3848 \\
\hspace{3mm} 3500 & 0.9423  & 18.5810  & 0.9722  & 16.5738  & 0.9376  & 18.9482 \\
\hline
\end{tabular}
\end{table*}

Chernoff's theorem also holds for other distributions. To demonstrate the broader applicability of our approach, we conduct a similar simulation study for Bernoulli populations. The simulation results for three specific parameter configurations of $(p_0, p_1)$, where $p_i$ denotes the success probability for population $i = 0, 1$, are presented in \autoref{Table 2}. As in the case of Normal distributions, the findings indicate that $\mathbb{E}(N_{1,N})$ converges to a finite value as $N$ increase. Each scenario begins with equal initial sample sizes for both populations to ensure a balanced start of the procedure.

\begin{table*}[h]
\centering
\caption{Simulation results for two Bernoulli populations with distinct success probability pairs $(p_0, p_1)$ under a fixed initial sample size for two-treatment procedure.} \label{Table 2}
\begin{tabular}{l cc cc cc}
\hline
Total Sample  & \multicolumn{2}{c}{$(p_0, p_1) = (0.5, 0.2)$} 
    & \multicolumn{2}{c}{$(p_0, p_1) = (0.6, 0.3)$} 
    & \multicolumn{2}{c}{$(p_0, p_1) = (0.8, 0.5)$} \\
\cline{2-7}
\hspace{2mm} Size $(N)$ & \multicolumn{1}{c}{PCS} & \multicolumn{1}{c}{$\mathbb{E}(N_{1, N})$} 
    & \multicolumn{1}{c}{PCS} & \multicolumn{1}{c}{$\mathbb{E}(N_{1, N})$} 
    & \multicolumn{1}{c}{PCS} & \multicolumn{1}{c}{$\mathbb{E}(N_{1, N})$} \\
\hline
\hspace{5mm} 200  & 0.9714  & 15.7156  & 0.9613 & 15.8792  & 0.9700  & 15.6926 \\
\hspace{5mm} 300  & 0.9704  & 15.8346  & 0.9623 & 15.9476  & 0.9667 & 15.7851 \\
\hspace{5mm} 400  & 0.9678  & 15.9646  & 0.9626  & 16.0014  & 0.9721 & 15.7609 \\
\hspace{5mm} 800  & 0.9690   & 16.0128  & 0.9648  & 16.5692  & 0.9732 & 15.9030 \\
\hspace{5mm} 900  & 0.9710   & 16.1097  & 0.9649  & 16.4865  & 0.9701 & 15.9566 \\
\hspace{3mm} 1000 & 0.9676  & 16.3659  & 0.9617  & 16.4393  & 0.9722 & 16.1128 \\
\hspace{3mm} 1500 & 0.9684  & 16.3968  & 0.9648  & 16.6400  & 0.9710  & 16.1403 \\
\hspace{3mm} 2000 & 0.9727  & 16.2998  & 0.9642  & 16.5592  & 0.9709 & 16.3007 \\
\hspace{3mm} 2500 & 0.9731  & 16.3938  & 0.9661  & 16.5206  & 0.9714 & 16.4970 \\
\hspace{3mm} 3000 & 0.9676  & 17.0703  & 0.9661  & 16.6232  & 0.9698 & 16.9098 \\
\hspace{3mm} 3500 & 0.9709  & 16.6486  & 0.9673  & 16.3406  & 0.9671 & 16.3561 \\
\hline
\end{tabular}
\end{table*}


In addition, we analyze cases where both populations are identically distributed. \autoref{Table 3} reports simulation results for this setting under both Normal and Bernoulli populations. These represent scenarios in which the treatments are equally effective. Accordingly, PCS is approximately $0.5$, reflecting the indistinguishability of the two treatments. Notably, $\mathbb{E}(N_{1,N})$ increases slowly but remains proportionally smaller relative to $N$, underscoring the bounded nature of the allocation count even under treatment equivalence.

\begin{table*}[h!]
\centering
\caption{Simulation results based on two identically distributed populations under a fixed initial sample size for two-treatment procedure.} \label{Table 3}
\resizebox{\textwidth}{!}{%
\begin{tabular}{c c c c c}
\hline
\multicolumn{5}{c}{$\operatorname{N}(1, 1)$ populations} \\
\hline
Total Sample Size $(N)$ 
& \multicolumn{1}{c}{PCS} 
& \multicolumn{1}{c}{$\mathbb{E}(N_{1, N})$} 
& \multicolumn{1}{c}{$\min \{\mathbb{E}(N^\prime_{0, N}), \mathbb{E}(N^\prime_{1, N})\}$} 
& \multicolumn{1}{c}{$\min \{\mathbb{E}(N^\prime_{0, N}), \mathbb{E}(N^\prime_{1, N})\}/N$} \\
\hline
\hspace{1mm} 200  & 0.4987  & \hspace{1mm} 56.5952    & \hspace{2mm} 99.8838   & 0.499419 \\
\hspace{1mm} 300  & 0.4994  & \hspace{1mm} 61.9217   & \hspace{1mm} 149.6241  & 0.498747 \\
\hspace{1mm} 400  & 0.5098  & \hspace{1mm} 65.9665   & \hspace{1mm} 197.1139  & 0.492785 \\
\hspace{1mm} 800  & 0.5084  & \hspace{1mm} 76.9530   & \hspace{1mm} 394.5546  & 0.493193 \\
\hspace{1mm} 900  & 0.4994  & \hspace{1mm} 79.9105   & \hspace{1mm} 449.5351  & 0.499483 \\
1000 & 0.5015  & \hspace{1mm} 79.7329   & \hspace{1mm} 498.0089  & 0.498009 \\
1500 & 0.4949  & \hspace{1mm} 88.1929   & \hspace{1mm} 743.1995  & 0.495466 \\
2000 & 0.4962  & \hspace{1mm} 91.7086   & \hspace{1mm} 993.3254  & 0.496663 \\
2500 & 0.4939  & \hspace{1mm} 96.0888   & 1237.8096 & 0.495124 \\
3000 & 0.5069  & \hspace{1mm} 97.8849   & 1477.5369 & 0.492512 \\
3500 & 0.5009  & 101.4421  & 1746.3167 & 0.498948 \\
\hline
\multicolumn{5}{c}{Bernoulli($0.5$) populations} \\
\hline
Total Sample Size $(N)$ 
& \multicolumn{1}{c}{PCS} 
& \multicolumn{1}{c}{$\mathbb{E}(N_{1, N})$} 
& \multicolumn{1}{c}{$\min \{\mathbb{E}(N^\prime_{0, N}), \mathbb{E}(N^\prime_{1, N})\}$} 
& \multicolumn{1}{c}{$\min \{\mathbb{E}(N^\prime_{0, N}), \mathbb{E}(N^\prime_{1, N})\}/N$} \\
\hline
\hspace{1mm} 200  & 0.5047  & \hspace{1mm} 56.1546   & \hspace{2mm} 99.5748     & 0.497874 \\
\hspace{1mm} 300  & 0.5014  & \hspace{1mm} 61.7647   & \hspace{1mm} 149.9355     & 0.499785 \\
\hspace{1mm} 400  & 0.5054  & \hspace{1mm} 65.7200   & \hspace{1mm} 198.4916     & 0.496229 \\
\hspace{1mm} 800  & 0.5016  & \hspace{1mm} 77.5626   & \hspace{1mm} 398.6040     & 0.498255 \\
\hspace{1mm} 900  & 0.5060  & \hspace{1mm} 79.6830   & \hspace{1mm} 445.1634     & 0.494626 \\
1000 & 0.5060  & \hspace{1mm} 79.5445   & \hspace{1mm} 494.7277     & 0.494728 \\
1500 & 0.5007  & \hspace{1mm} 87.8227   & \hspace{1mm} 749.0695     & 0.499380 \\
2000 & 0.5048  & \hspace{1mm} 90.6309   & \hspace{1mm} 992.2599     & 0.496130 \\
2500 & 0.5003  & \hspace{1mm} 93.9105   & 1247.3643     & 0.498946 \\
3000 & 0.4989  & \hspace{1mm} 99.5797   & 1497.4663     & 0.499155 \\
3500 & 0.5073  & 103.4383  & 1726.3047     & 0.493230 \\
\hline
\end{tabular}%
}
\end{table*}

\begin{table*}[h]
\centering
\caption{Simulation results for Normal populations with distinct mean triplets $(\theta_{1}, \theta_{2}, \theta_{3})$ and constant variance triplet $(\sigma_{1}^{2}, \sigma_{2}^{2}, \sigma_{3}^{2}) = (1, 0.7, 0.5)$ under a fixed initial sample size for multi-treatment procedure.} \label{Table 4}
\resizebox{\textwidth}{!}{%
\begin{tabular}{l | cc | cc}
\hline
Total Sample  & \multicolumn{2}{c|}{$(\theta_1, \theta_2, \theta_3) = (0.9, 0.2, 0)$} 
    & \multicolumn{2}{c}{$(\theta_1, \theta_2, \theta_3) = (2, 1.2, 0.5)$} \\
\cline{2-5}
\hspace{2mm} Size $(N)$
& PCS & $\mathbb{E}(\text{2nd max} \{N^\prime_{1, N}, N^\prime_{2, N}, N^\prime_{3, N}\})$
& PCS & $\mathbb{E}(\text{2nd max} \{N^\prime_{1, N}, N^\prime_{2, N}, N^\prime_{3, N}\})$ \\
\hline
\hspace{5mm} 200  & 0.9475  & \hspace{1mm} 9.3374  & 0.9527  & 6.9510 \\
\hspace{5mm} 300  & 0.9459  & \hspace{1mm} 9.6494  & 0.9503  & 7.0553 \\
\hspace{5mm} 400  & 0.9492  & 10.0209 & 0.9553  & 7.0896 \\
\hspace{5mm} 800  & 0.9425  & 10.8458 & 0.9503  & 7.9133 \\
\hspace{5mm} 900  & 0.9442  & 11.0578 & 0.9550  & 7.4772 \\
\hspace{3mm} 1000 & 0.9455  & 11.2956 & 0.9577  & 7.7322 \\
\hspace{3mm} 1500 & 0.9504  & 11.7424 & 0.9515  & 8.0190 \\
\hspace{3mm} 2000 & 0.9444  & 12.3375 & 0.9561  & 8.1553 \\
\hline
\end{tabular}%
}
\end{table*}

\begin{table*}[h]
\centering
\caption{Simulation results for two Normal populations with distinct mean pairs $(\theta_{0}, \theta_{1})$ and constant variance pair $(\sigma_{0}^{2}, \sigma_{1}^{2}) = (1, 0.7)$ based on \emph{Procedure III} of Bhandari et al. \citeyearpar{Bhandari2009} under a fixed initial sample size.} \label{Table 5}
\resizebox{\textwidth}{!}{%
\begin{tabular}{l | ccc | ccc | ccc}
\hline
Total Sample  & \multicolumn{3}{c|}{$(\theta_0, \theta_1) = (0.5, 0)$} 
    & \multicolumn{3}{c|}{$(\theta_0, \theta_1) = (0.8, 0.2)$} 
    & \multicolumn{3}{c}{$(\theta_0, \theta_1) = (1, 0.5)$} \\
\cline{2-10}
\hspace{2mm} Size $(N)$ 
& PCS & $\mathbb{E}(N^\prime_{1, N})$ & $\mathbb{E}(N^\prime_{1, N})/\log (N)$ 
& PCS & $\mathbb{E}(N^\prime_{1, N})$ & $\mathbb{E}(N^\prime_{1, N})/\log (N)$  
& PCS & $\mathbb{E}(N^\prime_{1, N})$ & $\mathbb{E}(N^\prime_{1, N})/\log (N)$ \\
\hline
\hspace{5mm} 200  & 0.9436  & \hspace{1mm} 24.6724  & \hspace{1mm} 4.6566  & 0.9725  & \hspace{1mm} 19.8617  & 3.7487  & 0.9442  & \hspace{1mm} 24.7791  & \hspace{1mm} 4.6768 \\
\hspace{5mm} 300  & 0.9408  & \hspace{1mm} 31.3563  & \hspace{1mm} 5.4975  & 0.9725  & \hspace{1mm} 22.6578  & 3.9724  & 0.9455  & \hspace{1mm} 29.9178  & \hspace{1mm} 5.2453 \\
\hspace{5mm} 400  & 0.9404  & \hspace{1mm} 37.3903  & \hspace{1mm} 6.2406  & 0.9699  & \hspace{1mm} 26.3717  & 4.4015  & 0.9447  & \hspace{1mm} 35.9691  & \hspace{1mm} 6.0034 \\
\hspace{5mm} 800  & 0.9401  & \hspace{1mm} 61.8690  & \hspace{1mm} 9.2554  & 0.9756  & \hspace{1mm} 34.0681  & 5.0965  & 0.9415  & \hspace{1mm} 60.8365  & \hspace{1mm} 9.1010 \\
\hspace{5mm} 900  & 0.9393  & \hspace{1mm} 68.3112  & 10.0422 & 0.9728  & \hspace{1mm} 39.2264  & 5.7666  & 0.9432  & \hspace{1mm} 65.5061  & \hspace{1mm} 9.6299 \\
\hspace{3mm} 1000 & 0.9372  & \hspace{1mm} 76.6053  & 11.0898 & 0.9692  & \hspace{1mm} 45.3261  & 6.5616  & 0.9410  & \hspace{1mm} 73.0315  & 10.5724 \\
\hspace{3mm} 1500 & 0.9377  & 107.6991 & 14.7266 & 0.9725  & \hspace{1mm} 55.8550  & 7.6375  & 0.9405  & 103.3666 & 14.1342 \\
\hspace{3mm} 2000 & 0.9411  & 131.7802 & 17.3374 & 0.9738  & \hspace{1mm} 67.7209  & 8.9096  & 0.9393  & 135.5299 & 17.8308 \\
\hspace{3mm} 2500 & 0.9444  & 153.7988 & 19.6572 & 0.9697  & \hspace{1mm} 91.2159  & 11.6584 & 0.9354  & 176.4448 & 22.5516 \\
\hspace{3mm} 3000 & 0.9429  & 185.7885 & 23.2051 & 0.9733  & \hspace{1mm} 95.9255  & 11.9812 & 0.9442  & 181.9366 & 22.7240 \\
\hspace{3mm} 3500 & 0.9415  & 218.7154 & 26.8017 & 0.9733  & 108.5402 & 13.3007 & 0.9426  & 214.6163 & 26.2993 \\
\hline
\end{tabular}%
}
\end{table*}

\begin{table*}[h]
\centering
\caption{Simulation results for two Bernoulli populations with distinct success probability pairs $(p_0, p_1)$ based on \emph{Procedure III} of Bhandari et al. \citeyearpar{Bhandari2009} under a fixed initial sample size.} \label{Table 6}
\resizebox{\textwidth}{!}{%
\begin{tabular}{l | ccc | ccc | ccc}
\hline
Total Sample  & \multicolumn{3}{c|}{$(p_0, p_1) = (0.5, 0.2)$} 
    & \multicolumn{3}{c|}{$(p_0, p_1) = (0.6, 0.3)$} 
    & \multicolumn{3}{c}{$(p_0, p_1) = (0.8, 0.5)$} \\
\cline{2-10}
\hspace{2mm} Size $(N)$ 
& PCS & $\mathbb{E}(N^\prime_{1, N})$ & $\mathbb{E}(N^\prime_{1, N})/\log (N)$ 
& PCS & $\mathbb{E}(N^\prime_{1, N})$ & $\mathbb{E}(N^\prime_{1, N})/\log (N)$  
& PCS & $\mathbb{E}(N^\prime_{1, N})$ & $\mathbb{E}(N^\prime_{1, N})/\log (N)$ \\
\hline
\hspace{5mm} 200  & 0.9707  & \hspace{1mm} 20.0885  & \hspace{1mm} 3.7915  & 0.9644  & \hspace{1mm} 21.0589  & \hspace{1mm} 3.9746  & 0.9693  & \hspace{1mm} 20.1448  & \hspace{1mm} 3.8021 \\
\hspace{5mm} 300  & 0.9694  & \hspace{1mm} 23.3904  & \hspace{1mm} 4.1009  & 0.9667  & \hspace{1mm} 24.0227  & \hspace{1mm} 4.2117  & 0.9707  & \hspace{1mm} 22.9941  & \hspace{1mm} 4.0314 \\
\hspace{5mm} 400  & 0.9686  & \hspace{1mm} 26.7283  & \hspace{1mm} 4.4611  & 0.9655  & \hspace{1mm} 27.8444  & \hspace{1mm} 4.6473  & 0.9720  & \hspace{1mm} 25.2552  & \hspace{1mm} 4.2152 \\
\hspace{5mm} 800  & 0.9722  & \hspace{1mm} 36.5467  & \hspace{1mm} 5.4673  & 0.9657  & \hspace{1mm} 41.2123  & \hspace{1mm} 6.1652  & 0.9723  & \hspace{1mm} 36.5258  & \hspace{1mm} 5.4642 \\
\hspace{5mm} 900  & 0.9695  & \hspace{1mm} 41.6838  & \hspace{1mm} 6.1278  & 0.9670  & \hspace{1mm} 43.9721  & \hspace{1mm} 6.4642  & 0.9710  & \hspace{1mm} 40.0918  & \hspace{1mm} 5.8938 \\
\hspace{3mm} 1000 & 0.9712  & \hspace{1mm} 43.0519  & \hspace{1mm} 6.2324  & 0.9644  & \hspace{1mm} 49.4955  & \hspace{1mm} 7.1652  & 0.9686  & \hspace{1mm} 45.1317  & \hspace{1mm} 6.5335 \\
\hspace{3mm} 1500 & 0.9691  & \hspace{1mm} 60.6695  & \hspace{1mm} 8.2959  & 0.9655  & \hspace{1mm} 65.7791  & \hspace{1mm} 8.9945  & 0.9699  & \hspace{1mm} 59.2646  & \hspace{1mm} 8.1038 \\
\hspace{3mm} 2000 & 0.9700  & \hspace{1mm} 74.1982  & \hspace{1mm} 9.7618  & 0.9608  & \hspace{1mm} 92.1311  & 12.1211 & 0.9701  & \hspace{1mm} 73.8442  & \hspace{1mm} 9.7152 \\
\hspace{3mm} 2500 & 0.9679  & \hspace{1mm} 94.6896  & 12.1024 & 0.9633  & 106.1061 & 13.5615 & 0.9692  & \hspace{1mm} 90.5637  & 11.5750 \\
\hspace{3mm} 3000 & 0.9708  & 101.9465 & 12.7332 & 0.9655  & 117.9213 & 14.7284 & 0.9720  & \hspace{1mm} 98.1398  & 12.2577 \\
\hspace{3mm} 3500 & 0.9711  & 115.4750 & 14.1504 & 0.9666  & 130.7154 & 16.0180 & 0.9704  & 117.2357 & 14.3662 \\
\hline
\end{tabular}%
}
\end{table*}


We further extend our simulations to a multi-treatment scenario involving three competing treatments, as motivated by the theoretical framework in Section \ref{Section 5}. In this case, we focus on the second-largest allocation count, as the maximal is invariably associated with the best-performing treatment under the allocation rule. The corresponding results, shown in \autoref{Table 4}, again indicate that this second-largest allocation remains finite as $N$ increases, consistent with our theoretical predictions.

For comparative purposes, we provide simulation results in \autoref{Table 5} and \autoref{Table 6} based on \emph{Procedure III} of Bhandari et al. \citeyearpar{Bhandari2009}. These demonstrate a key difference between our method and their earlier proposal: in their procedure, $\mathbb{E}(N^\prime_{1, N})/\log (N)$ stabilizes to a constant, whereas our approach maintains that $\mathbb{E}(N_{1,N})$ stays uniformly bounded across increasing $N$. This highlights the efficiency of our adaptive allocation strategy in the long-run performance.

Finally, we note that PCS is estimated as the mean of $10,000$ independent Bernoulli trials, each taking values $0$ or $1$. Hence, by basic binomial variance arguments, the standard error of the PCS estimate is bounded above by $\sqrt{\frac{1}{4 \times 10000}} = 0.005$. This ensures high precision of the simulation estimates and justifies the reliability of our numerical conclusions.

\newpage

\begin{remark}
    Note that, to compare our proposed two-treatment procedure with \emph{Procedure III} of Bhandari et al. \citeyearpar{Bhandari2009}, identical parameter settings were used for both the Normal and Bernoulli simulations. A comparison between Tables \ref{Table 1} and \ref{Table 2} (representing our method) and Tables \ref{Table 5} and \ref{Table 6} (representing \emph{Procedure III}) yields that our analyses are far more effective under the respective parameter configurations.
\end{remark}

\clearpage

\newpage

\section{Real Data Analysis} \label{Section 7}
\setlength{\parindent}{20pt}

We consider the following datasets to illustrate the practical relevance and effectiveness of our proposed method.

\subsection{Pregabalin Drug Trial for Postherpetic Neuralgia (PHN)} \label{Pregabalin drug data set}

We analyze the performance of our proposed two-treatment adaptive allocation strategy, using data from a randomized, double-blind, placebo-controlled trial to evaluate the efficacy of the drug Pregabalin for treating postherpetic neuralgia (PHN), as described in Dworkin et al. \citeyearpar{Pregabalin} and revisited in Biswas et al. \citeyearpar{PhaseIII2008}, pp. $47$.

In the original trial, patients were randomly assigned to either Pregabalin or placebo arms, and pain intensity was measured using an $11$-point numerical rating scale over an $8$-week period, where lower scores indicate better outcomes. The endpoint mean pain scores were $3.60$ (Pregabalin) and $5.29$ (placebo), with corresponding standard deviations $2.25$ and $2.20$, respectively.

As we do not have access to the raw dataset, we instead reconstruct a typical example of the data structure using these summary statistics. To adapt this continuous response setting to our model-based framework in Sections \ref{Section 2} -- \ref{Section 4}, we modeled the responses for Pregabalin and placebo groups as independent Normal distributions, using the reported endpoint mean scores and standard deviations. As lower mean pain scores indicate greater efficacy, the negation of the original mean scores reflects the assumption that higher values are favorable in our framework. Specifically, we use the following values as the true parameters:
\begin{itemize}
    \item Pregabalin (treatment $0$): mean $\theta_0 = -3.60$, sd $\sigma_0 = 2.25$,
    \item Placebo (treatment $1$): mean $\theta_1 = -5.29$, sd $\sigma_1 = 2.20$.
\end{itemize}

Using these parameters, we redesign sequential trials and generated $10,000$ replications following the two-treatment adaptive allocation rule for varying total sample sizes $N$ and computed the PCS as well as ($\mathbb{E}(N_{1,N})$), as considered in Section \ref{Section 6}. The results are presented in \autoref{Table 7}.

\newpage

\begin{table*}[h!]
\centering
\caption{Results for Pregabalin trial dataset.} \label{Table 7}
\begin{tabular}{c c c}
\hline
Total Sample Size $(N)$ 
& \multicolumn{1}{c}{PCS} 
& \multicolumn{1}{c}{$\mathbb{E}(N_{1, N})$} \\
\hline
\hspace{1mm} 200  & 0.9381 & 7.4673 \\
\hspace{1mm} 300  & 0.9346 & 7.6999 \\
\hspace{1mm} 400  & 0.9346 & 7.7434 \\
\hspace{1mm} 800  & 0.9326 & 8.5777 \\
\hspace{1mm} 900  & 0.9380 & 8.4504 \\
1000 & 0.9372 & 8.3561 \\
1500 & 0.9362 & 8.5037 \\
2000 & 0.9376 & 9.2875 \\
\hline
\end{tabular}
\end{table*}

The adaptive rule showed a marked preference for allocating patients to the superior treatment (Pregabalin), resulting in a high PCS (approximately $0.94$) and reduced average sample size for the inferior treatment (placebo) i.e., consistent with our theoretical results in \autoref{Theorem 3.1}. These calculations show a significant reduction in sample size allocated to the less effective treatment.



\subsection{Fluoxetine drug trial for Depressive Disorder} \label{Fluoxetine drug data set}

To evaluate the adaptive performance in real clinical settings with binary response, we examined a well-documented adaptive clinical trial conducted by Tamura et al. \citeyearpar{Fluoxetine} on the antidepressant drug Fluoxetine, compared to a placebo, which employed an adaptive design using a randomized play-the-winner rule (RPTW) and further discussed in Biswas et al. \citeyearpar{PhaseIII2008}, pp. $46$.

In this trial, the binary response variable was defined by the Hamilton Depression Rating Scale (\textrm{HAMD\textsubscript{17}}): patients achieving at least a $50\%$ reduction in \textrm{HAMD\textsubscript{17}} score after a minimum of $3$-weeks of therapy were labeled as responders. We apply the adaptive allocation methodology specifically to those patients assigned to the shortened REML stratum as considered in Biswas et al. \citeyearpar{PhaseIII2008}. To fit this into our framework introduced in Section \ref{Section 2}, we treat the response outcomes as arising from Bernoulli distributions for both treatments --- Fluoxetine and placebo. Specifically, we consider the observed sample proportions of responders for Fluoxetine and placebo as the true success probabilities $p_0$ and $p_1$, respectively. The reported response rates were:
\begin{itemize}
    \item Fluoxetine (treatment $0$): $p_0 = 0.58$,
    \item Placebo (treatment $1$): $p_1 = 0.36$.
\end{itemize}

Using these parameters, we construct a representative data structure which is used to redesign a series of sequential trials under our proposed adaptive allocation rule and stopping conditions described in Sections \ref{Section 2} -- \ref{Section 4}. We then carry out the trial for $10,000$ independent replications using $p_0 = 0.58$ and $p_1 = 0.36$ as the true success probabilities. \autoref{Table 8} summarizes the  results.

\begin{table*}[h!]
\centering
\caption{Results for Fluoxetine trial dataset.} \label{Table 8}
\begin{tabular}{c c c}
\hline
Total Sample Size $(N)$ 
& \multicolumn{1}{c}{PCS} 
& \multicolumn{1}{c}{$\mathbb{E}(N_{1, N})$} \\
\hline
\hspace{1mm} 200  & 0.9419 & 21.5476 \\
\hspace{1mm} 300  & 0.9373 & 22.1276 \\
\hspace{1mm} 400  & 0.9412 & 22.4579 \\
\hspace{1mm} 800  & 0.9424 & 22.8271 \\
\hspace{1mm} 900  & 0.9384 & 22.9035 \\
1000 & 0.9415 & 23.1794 \\
1500 & 0.9411 & 23.0954 \\
2000 & 0.9407 & 23.5513 \\
\hline
\end{tabular}
\end{table*}

Across increasing total sample sizes $N$, we observe a consistently high PCS (approximately $0.94$), affirming the design’s ability to identify the superior treatment. Notably, the number of patients allocated to the less effective treatment (placebo) --- remained stably low, supporting our theoretical result from Section \ref{Section 3}. Similar to the conclusion drawn from the data analysis in Section \ref{Pregabalin drug data set}, the proposed method may also have contributed to a reduction in sample size allocated to the less effective treatment in this case also.

\begin{remark}
    These analyses exemplify the broad applicability and effectiveness of the proposed adaptive allocation rules under realistic clinical contexts where treatment effects are significantly different --- as seen in the case of continuous responses for Pregabalin's efficacy in managing PHN-related pain and binary responses in the Fluoxetine drug trial for depressive disorder. In contrast to prior findings by Bhandari et al. \citeyearpar{Bhandari2009}, where the allocation count to the less effective treatment was shown to grow logarithmically with sample size under a large initial sample regime, our analyses show boundedness of $\mathbb{E}(N_{1,N})$ without requiring large pilot samples or complex tuning --- thus confirming the practical and ethical efficiency of our framework. Both analyses emphasize the ethical advantage of our design: minimizing patient exposure to inferior treatments while maintaining statistical efficiency. This supports the effectiveness of our allocation strategy under realistic conditions in general, and in particular for medical studies, as shown in the datasets considered.
\end{remark}



\section{Conclusion} \label{Section 8}
\setlength{\parindent}{20pt}

In this study, we considered and refined a known adaptive sequential sampling procedure in the context of two-treatment clinical trials, showing that the number of applications of the less effective drug remains finite with all moments bounded. Our theoretical results leverage Chernoff's theorem, which extends applicability beyond normal responses to a broader range of response distributions, and hence it has meaningful practical implications. We further generalized the method to a multi-treatment setup and established similar finiteness properties for allocations to suboptimal treatments.

Comprehensive simulation studies verified that the empirical average of allocations to the less effective treatment remains low and stable across increasing sample sizes. This contrasts with earlier findings of logarithmic growth, and highlights that boundedness can be achieved without large pilot samples or complex tuning. Real-data analyses further supported the practical viability of our method, demonstrating high PCS and minimized exposure to inferior treatments. Overall, the proposed adaptive strategy thus provides a statistically efficient and ethically sound framework for sequential decision-making in two or multi-armed experimental settings (clinical trials).


\bibliography{references}

\end{document}